\numberwithin{equation}{section}
\newtheorem{thm}{Theorem}
\numberwithin{thm}{section}
\newtheorem{defn}[thm]{Definition}
\newtheorem{prop}[thm]{Proposition}
\newtheorem{question}{Question}
\begin{document}

\title[Boundary points with squeezing function one]{On Boundary 
points at which the squeezing function tends to one}

\author{Seungro Joo and Kang-Tae Kim}
\address{Department of Mathematics, POSTECH, 
Pohang 37673 The Republic of Korea}%
\email{beartan@postech.ac.kr (Joo)}
\email{kimkt@postech.ac.kr (Kim)}
\thanks{This research was supported in part by the Grant 2011-0030044 
(The SRC-GAIA) of the National Research Foundation of The Republic 
of Korea.}
\subjclass[2010]{32H02, 32M17}%
\keywords{Holomorphic mappings, Automorphisms of $\mathbb{C}^2$}%

\begin{abstract}
J.E. Forn{\ae}ss posed the question whether the boundary point of 
smoothly bounded pseudoconvex domain is strictly pseudoconvex, 
if the asymptotic limit of the squeezing function is 1. 
The purpose of this paper is to give an affirmative answer when the
domain is in \(\mathbb C^2\) with smooth boundary of finite type in
the sense of D'Angelo \cite{DAN}.
\end{abstract}

\maketitle

\section{Introduction}

Let \(\mathbb{B}^n (p; r) := \{z \in\mathbb{C}^n \colon \|z-p\| < r\}\).   
For a domain $\Omega$ in \(\mathbb{C}^n\) and 
$z_0 \in \Omega$, let
\(
\mathcal{F}_{\Omega}(z_0) := \{ f \colon \Omega \to \mathbb{B}^n (0;1) 
\mid f \textrm{ 1-1 holomorphic}, f(z_0) = 0 \} \).
Then the \textit{squeezing function} $s_{\Omega} \colon \Omega 
\to \mathbb R$ is defined to be
$$
s_{\Omega}(z) := \sup \{r \colon \mathbb{B}^n (0;r)
\subset f(\Omega), f \in \mathcal{F}_{\Omega}(z) \}.
$$
Note that $0<s_{\Omega}(z) \le 1$ for any \(z \in \Omega\).

This concept first appeared in \cite{LSY1, LSY2} in the context 
concerning the \textit{holomorphic homogeneous regular} manifolds.  
But the name squeezing function comes from \cite{DGZ}; 
this concept is closely related to the concept of 
\textit{bounded geometry} by Cheng and Yau \cite{CY}, as one sees
from \cite{Yeung}.  It is obvious from
its construction that the squeezing function is a biholomorphic invariant.
\medskip

Recent studies have shown:

\begin{thm}[\cite{KZ}, See also \cite{DFW, DGZ}]
If a bounded domain $\Omega$ in $\mathbb{C}^n$ has a boundary 
point, say \(p\), about which the boundary is strictly pseudoconvex, 
then $\displaystyle\lim_{\Omega \ni z \to p} s_{\Omega}(z) = 1$.
\end{thm}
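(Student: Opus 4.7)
My plan is to use a scaling argument at $p$. Strict pseudoconvexity means that after an affine change of coordinates followed by a polynomial biholomorphic change of variables (absorbing the pure holomorphic terms in the Taylor expansion of a local defining function), we may assume $p = 0$ and that $\Omega$ is locally cut out by
\[
\rho(z) = 2\,\mathrm{Re}\, z_n + |z'|^2 + O(|z|^3) < 0,
\]
with $z' = (z_1,\ldots,z_{n-1})$. Under this normal form, the model tangent domain at $p$ is the Siegel half space $\mathbb{H} = \{2\,\mathrm{Re}\, z_n + |z'|^2 < 0\}$, which is biholomorphic to $\mathbb{B}^n(0;1)$ via an explicit Cayley transform $C$. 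The strategy is then to produce, for each $w \in \Omega$ close to $p$, an element of $\mathcal{F}_\Omega(w)$ whose image fills almost all of $\mathbb{B}^n(0;1)$.

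For such $w$, set $\delta = \delta(w) = \mathrm{dist}(w,\partial\Omega)$ and introduce the non-isotropic Pinchuk dilation
\[
T_w(z) = \left(\frac{z_1-w_1}{\sqrt{\delta}},\ldots,\frac{z_{n-1}-w_{n-1}}{\sqrt{\delta}},\frac{z_n-w_n}{\delta}\right),
\]
which sends $w$ to $0$. A direct Taylor computation using the normal form shows that on every compact subset of $\mathbb{C}^n$, the sets $T_w(\Omega)$ converge in the local Hausdorff sense to $\mathbb{H}$ as $w \to p$, because the cubic error in $\rho$ picks up a factor $\delta^{1/2} \to 0$ after dilation. The natural candidate for the squeezing map is then $f_w := C \circ T_w$, which is injective holomorphic and satisfies $f_w(w)=0$.

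The central obstacle is that $f_w$, constructed from a purely local normal form, need not satisfy $f_w(\Omega) \subset \mathbb{B}^n(0;1)$: points of $\Omega$ far from $p$ get sent by $T_w$ to the region near infinity in $\mathbb{C}^n$, and under $C$ these may land outside $\overline{\mathbb{B}^n(0;1)}$. To fix this one exploits strict pseudoconvexity at $p$ by constructing a local holomorphic peak function $h$ at $p$ (which exists because the Levi form is positive definite there) and using the boundedness of $\Omega$ to convert $h$ into a global separating function. One then twists $f_w$ by a factor $\alpha_w(z)$, built from $h$ and a shrinking scalar $r_w < 1$ with $r_w \to 1$, so as to dominate the stray parts of $f_w(\Omega)$ while leaving the local picture near $p$ essentially undisturbed. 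Matching the peak-function correction against the Cayley composition so that both $f_w$ remains injective and $f_w(\Omega) \subset \mathbb{B}^n(0;1)$ holds is the most delicate part.

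Granted such a construction, the local Hausdorff convergence $T_w(\Omega)\to\mathbb{H}$ on compacta combined with $C$ being a biholomorphism onto $\mathbb{B}^n(0;1)$ yields, for every $\varepsilon>0$, the inclusion $\mathbb{B}^n(0;1-\varepsilon)\subset f_w(\Omega)$ once $w$ is sufficiently close to $p$. This gives $s_\Omega(w) \geq 1-\varepsilon$ for such $w$, hence $s_\Omega(z) \to 1$ as $\Omega \ni z \to p$. The strict pseudoconvexity is used twice: once to guarantee the quadratic normal form producing the ball-like model, and once to supply the peak function needed to carry out the globalization.
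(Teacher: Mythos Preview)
The paper does not prove this theorem; it is quoted from \cite{KZ} (with related results in \cite{DFW, DGZ}) purely as background motivating Forn{\ae}ss's question, so there is no in-paper argument to compare your proposal against.

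On its own merits, your sketch has the right philosophy but the globalization step---the part you yourself flag as ``the most delicate''---is not actually carried out, and the device you describe does not obviously work. ``Twisting $f_w$ by a factor $\alpha_w(z)$'' built from a peak function is not a well-defined operation: multiplying a vector-valued injective holomorphic map componentwise by a scalar holomorphic function does not in general preserve injectivity, nor does it give controlled image, and you do not say what other operation you intend. In the cited references the order of operations is reversed. One first constructs a \emph{single} global injective holomorphic map $F \colon \Omega \to \mathbb{B}^n$ (an exposing map) carrying $p$ to a point of $\partial\mathbb{B}^n$ and acting as a local biholomorphism near $p$; this is where the genuine analytic work (peak functions, extension and embedding arguments as in \cite{DFW}) is invested, and it is done once, independently of $w$. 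Then for each $z$ near $p$ one post-composes $F$ with the M\"obius automorphism of $\mathbb{B}^n$ sending $F(z)$ to the origin; as $F(z)\to\partial\mathbb{B}^n$ this automorphism expands a shrinking neighborhood of $F(z)$ to fill almost all of $\mathbb{B}^n$, yielding $s_\Omega(z)\to 1$. Your phrase ``Granted such a construction'' is precisely the nontrivial content of those papers; what follows it is the easy part.
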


J. E. Forn{\ae}ss asked recently whether its converse 
is true (cf.\ \cite{FW}, Sections 1 and 4), i.e., he posed the following question.  

\begin{question} \label{Fornaess}
If $\Omega$ is a bounded domain with smooth boundary, and if  
$\displaystyle \lim_{\Omega \ni z \to p \in \partial\Omega} 
s_{\Omega}(z) = 1$, then is the boundary of $\Omega$ strictly 
pseudoconvex at \(p\)?
\end{question}

In a recent article, A. Zimmer has shown that the answer is affirmative
if the bounded domain is also assumed to be convex \cite{Zimm}. 
\medskip

The main purpose of this article is to present the following result.

\begin{thm} \label{main}
Let $\Omega$ be a bounded domain in $\mathbb{C}^2$ with 
smooth pseudoconvex boundary. If $p$ is a boundary point of $\Omega$
of finite type, in the sense of D'Angelo \cite{DAN}, and if 
$\lim_{\Omega \ni z \to p} S_{\Omega}(z) = 1$, then \(\partial\Omega\) 
is strictly pseudoconvex at $p$.
\end{thm}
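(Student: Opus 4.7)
The plan is to argue by contradiction using Catlin's anisotropic scaling in $\mathbb{C}^2$. Suppose $\partial\Omega$ is not strictly pseudoconvex at $p$; since in $\mathbb{C}^2$ strict pseudoconvexity at a finite-type point is equivalent to D'Angelo type equal to $2$, the type at $p$ is some even integer $2m$ with $m \ge 2$. By Catlin's normal form, there exist holomorphic coordinates $(z_1, z_2)$ centered at $p$ in which a local defining function of $\Omega$ reads
$$
\rho(z) = \operatorname{Re} z_1 + P(z_2, \bar z_2) + R(z),
$$
where $P$ is a real, subharmonic polynomial of degree exactly $2m$ containing no pluriharmonic terms, and $R$ is a weighted-higher-order error. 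The assumption $m \ge 2$ forces $P$ not to be a pure nondegenerate Hermitian quadratic.

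Next, pick a sequence $q_k \in \Omega$ with $q_k \to p$ non-tangentially, and attach to each $q_k$ the Catlin-type affine dilation
$$
\Lambda_k(z_1, z_2) = \bigl(\delta_k^{-1}(z_1 - q_k^{(1)}),\, \delta_k^{-1/(2m)}(z_2 - q_k^{(2)})\bigr),
$$
composed, if necessary, with a unitary rotation normalizing the complex-tangential direction. A routine computation, following Catlin, shows that the rescaled domains $\Omega_k := \Lambda_k(\Omega)$ converge in the local Hausdorff sense to the polynomial model
$$
M_P := \{(w_1, w_2) \in \mathbb{C}^2 : \operatorname{Re} w_1 + P(w_2, \bar w_2) < 0\},
$$
and the images $\Lambda_k(q_k)$ converge to a fixed interior point $w^\ast \in M_P$. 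Biholomorphic invariance of the squeezing function gives $s_{\Omega_k}(\Lambda_k(q_k)) = s_\Omega(q_k) \to 1$. Choose almost-extremal embeddings $f_k \colon \Omega_k \to \mathbb{B}^2$ with $f_k(\Lambda_k(q_k)) = 0$ and $\mathbb{B}^2(0; r_k) \subset f_k(\Omega_k)$ for some $r_k \to 1$. By Montel on compact subsets of $M_P$ one extracts a holomorphic limit $f \colon M_P \to \overline{\mathbb{B}^2}$ with $f(w^\ast) = 0$, and Hurwitz yields that $f$ is injective with image in $\mathbb{B}^2$. Combining the interior inclusions $\mathbb{B}^2(0;r_k) \subset f_k(\Omega_k)$ with the domain convergence should then force $f(M_P) = \mathbb{B}^2$, so that $M_P$ is biholomorphic to the unit ball.

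The contradiction comes from the rigidity of such models: the domain $M_P$ is biholomorphic to $\mathbb{B}^2$ only when $P(z_2,\bar z_2) = c|z_2|^2$ with $c > 0$. Several routes are available to see this. One is to invoke a Bedford--Pinchuk-type classification of pseudoconvex polynomial models in $\mathbb{C}^2$ with non-compact automorphism group (here $M_P$ admits the parabolic translations $(w_1,w_2) \mapsto (w_1 + it, w_2)$ and the weighted dilations $(w_1,w_2) \mapsto (\lambda^{2m} w_1, \lambda w_2)$). Another is to compare the Kobayashi indicatrix of $M_P$ at $w^\ast$ with that of the ball: the type $2m$ of $P$ is detected in the anisotropic shape of the indicatrix and is incompatible with the round indicatrix of $\mathbb{B}^2$ unless $m = 1$. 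Either route contradicts $m \ge 2$, completing the proof.

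The main obstacle is the third step: verifying that the rescaled squeezing embeddings $f_k$ actually converge to a biholomorphism onto $\mathbb{B}^2$ in the limit. Injectivity of $f$ is handled by Hurwitz once nonconstancy is established, but surjectivity requires a careful normal-families argument on the inverses $f_k^{-1} \colon \mathbb{B}^2(0; r_k) \to \Omega_k$, for which one needs the rescaled domains $\Omega_k$ to be uniformly Kobayashi hyperbolic on compacta of $M_P$. This uses the tautness of the polynomial model $M_P$, which is known for subharmonic polynomials of finite degree in $\mathbb{C}^2$. If this limiting argument is made rigorous, the rest of the proof reduces to the model-rigidity step, where the quadratic form of $P$ emerges as the only way the limit can be the ball.
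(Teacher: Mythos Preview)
Your overall architecture coincides with the paper's: scale at $p$ to a polynomial model $M_P=\{\operatorname{Re}w+P(z,\bar z)<0\}$, show that the squeezing hypothesis forces $M_P\cong\mathbb{B}^2$, and then invoke rigidity of such models to conclude $P=c|z|^2$. The paper uses Pinchuk-type stretching maps rather than Catlin dilations and cites Oeljeklaus for the rigidity step, but these are the same ideas you outline.

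The substantive divergence is exactly where you flag the ``main obstacle'': surjectivity of the limit embedding $f\colon M_P\to\mathbb{B}^2$. Your proposed route through tautness of $M_P$ and a normal-families argument on $f_k^{-1}$ is not fleshed out, and it is precisely here that the paper supplies a nontrivial idea you are missing. The difficulty is that the targets $\Omega_k$ of the inverses converge to the \emph{unbounded} domain $M_P$, so Montel does not apply directly, and comparing Kobayashi metrics of $\Omega_k$ and $M_P$ uniformly is delicate. The paper bypasses this entirely: assuming there is a point $q\in\mathbb{B}^2\setminus f(M_P)$, it runs a \emph{second} scaling along the sequence $f_j^{-1}(q)\to p$, obtaining another limit $\widehat{\sigma^q}\colon\widehat{\Omega^q}\to\mathbb{B}^2$. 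The transition maps $\beta_j=\alpha_j^q\circ\alpha_j^{-1}$ are polynomial automorphisms of $\mathbb{C}^2$ of degree $\le 2k$, so their limit $\widehat{\beta}$ is again a polynomial automorphism and restricts to a biholomorphism $M_P\to\widehat{\Omega^q}$. But $\widehat{\beta}$ would then carry a sequence escaping to $\partial M_P$ (coming from points of $\mathbb{B}^2$ approaching $q$) to a sequence converging to the interior point $(-1,0)\in\widehat{\Omega^q}$, contradicting properness. This double-scaling/polynomial-automorphism trick is the key technical input; your sketch does not contain it, and without it the surjectivity step remains a genuine gap.
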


We remark that our proof is only for complex dimension 2. It is mainly 
due to the limitation of current knowledge concerning the convergence 
of the scaling methods (cf.\ \cite{GKK}).  If the domain is convex
and Kobayashi hyperbolic for instance, then there is no such restriction. 
Consequently in case the domain is bounded convex, as treated in 
\cite{Zimm}, our proof-arguments also answer Question \ref{Fornaess} 
affirmatively, which we shall put an explication of, in a remark at the 
end.

On the other hand, as shown recently in \cite{FW}, the answer to 
Question \ref{Fornaess} is negative if no other conditions on 
the bounded domain than the boundary being $\mathcal C^2$ 
are assumed.  Also the high dimensions can imply some unexpected
phenomenon \cite{FR}. Thus the hypothesis of the our theorem is 
in some sense reasonable.

\section{Construction of the scaling sequence}

Let $\Omega$ and the boundary point $p \in \partial \Omega$ be
as in the hypothesis of the theorem. 
\smallskip

Let \(v_p\) be the unit normal vector to \(\partial\Omega\) at $p$
pointing outward.  Then take a sequence $\{ p_j \} \subset \Omega$ 
such that \(p - p_j = t_j v_p\) for some \(t_j\) satisfying 
\begin{itemize}
\item[(1)] \( 0 < t_{j+1} < t_j \) for every \(j = 1,2,\ldots\) and 
\item[(2)] \(\lim_{j\to\infty} t_j = 0\). 
\end{itemize}
\smallskip

We also set $\delta_j := 2(1 - s_{\Omega}(p_j))$ for every $j$. 
Since $\lim_{j \to \infty} s_{\Omega}(p_j) = 1$ by assumption, there 
exists, for each \(j\), an injective holomorphic map 
$f_j \colon \Omega \to \mathbb{B}^2$ such that $f_j(p_j) = (0, 0)$ and 
$\mathbb{B}^2(0; 1 - \delta_j) \subset f_j(\Omega)$ for all $j$.
\medskip

The next step is to use the dilation sequence 
$\{ \alpha_j : \Omega \to \mathbb{C}^2 \}$ introduced in 
\cite{JooSR}, Section 2.2. (We remark that this is a mild but 
necessary modification of Pinchuk's stretching sequence \cite{BP, Pin}).
If the type of $p$ is $2k$ for some integer \(k\), then there is a 
neighborhood $U$ of $p$ and $\alpha_j \in Aut(\mathbb{C}^2)$ 
satisfying the following properties:
\begin{enumerate}
\item The map $\alpha_j$ is the composition (in order) of a 
translation, a unitary map, a triangular map and a dilation map.
\item $\alpha_j(p) = (0, 0)$ and $\alpha_j(p_j) = (-1, 0)$ for all $j$.
\item The local defining function $\rho_j$ of $\alpha_j (\Omega \cap U)$ 
at $(0, 0)$ is represented by
\begin{equation}
\begin{split}
\rho_j(w, z) = \textrm{Re}\,w &+ P_j(z, \bar{z}) + R_j(z, \bar{z}) \\
& \qquad + (\textrm{Im } w) Q_j \left( \textrm{Im } w, z, \bar{z} \right),
\end{split}
\end{equation}
where:
\begin{itemize} 
\item $P_j$ is a nonzero real-valued homogeneous subharmonic 
polynomial of degree $2k$ with no harmonic terms, 
\item $R_j$ and $Q_j$ 
are real-valued smooth functions satisfying the conditions on the 
vanishing:
order $\nu(R_j(z, \bar{z})) > 2k$ and\break $\nu \left(Q_j 
\left( \textrm{Im}w, z, \bar{z} \right) \right) \ge 1$.
\end{itemize}
\end{enumerate}

\noindent
Note that the convergences $P_j \to \widehat{P}, R_j \to 0$ and 
$Q_j \to 0$ are uniform on compact subsets of $\mathbb{C}^2$, while
$\widehat{P}$ is a nonzero real-valued subharmonic polynomial of 
degree $2k$. This is proved in detail in Lemma 2.4 of \cite{JooSR}.  
Consequently, $\rho_j$ converges uniformly to 
\(\widehat{\rho} := \textrm{Re}\,w + \widehat{P}(z, \bar{z})\) 
on compact subsets of $\mathbb{C}^2$.

\begin{figure}[!h]
\centering
\includegraphics[width=0.8\columnwidth]{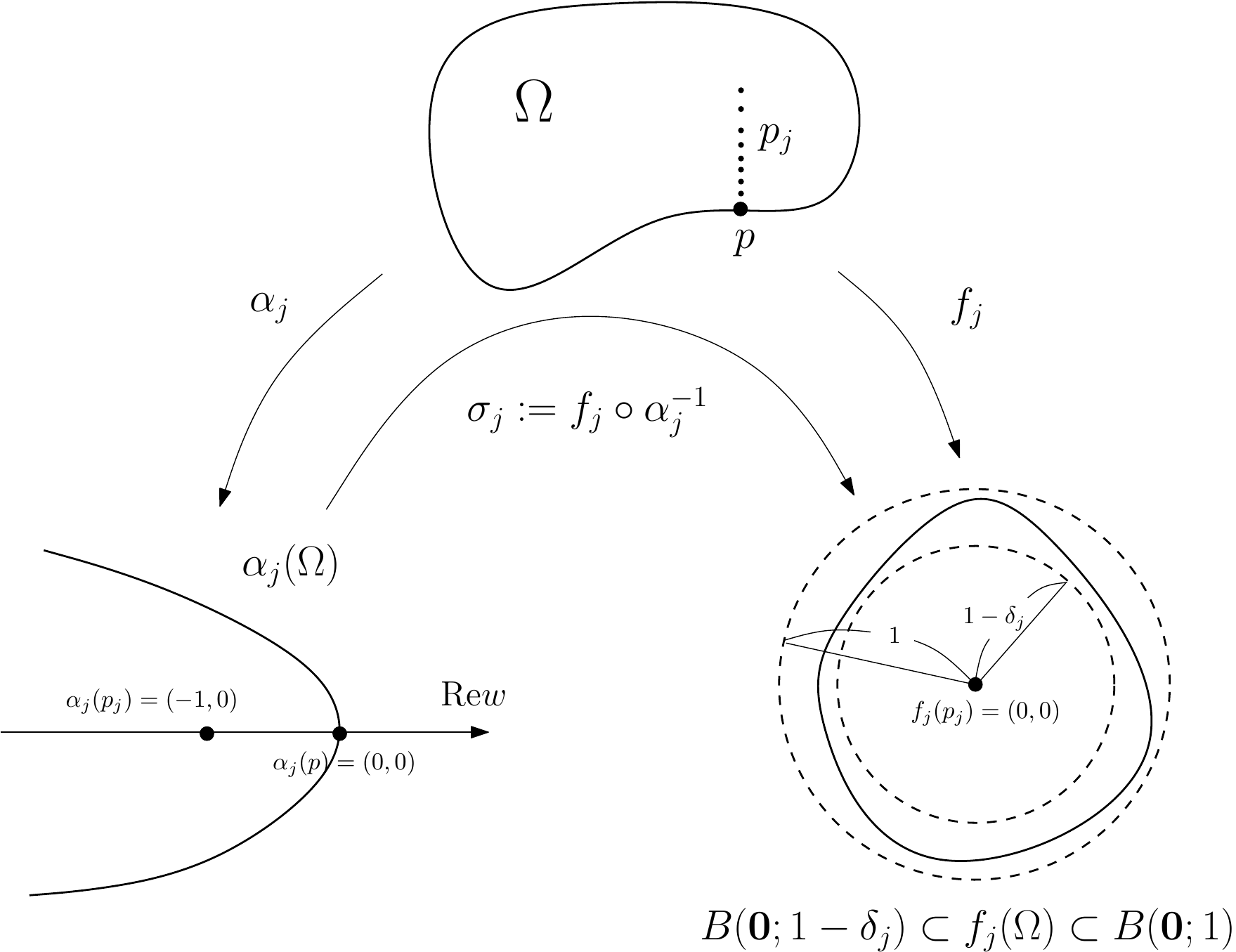}
\end{figure}
\bigskip

Define $\sigma_j : \alpha_j(\Omega) \to f_j(\Omega)$ by
$\sigma_j := f_j \circ \alpha_j^{-1}$. Note that $\sigma_j(-1, 0) = (0, 0)$ 
for every $j$.

\section{Convergence of the ``reverse'' scaling sequence $\{ \sigma_j \}$}

Recall the concept of normal set-convergence introduced in 
\cite{GKK}, Section 9.2.2; it will give the necessary control for the 
convergence. 

\begin{defn} \rm
Let $\Omega_j$ be domains in $\mathbb{C}^n$ for each $j = 1, 2, \cdots$. The sequence $\Omega_j$ is said to {\it converge normally} to a domain $\widehat{\Omega}$, if the following two conditions hold:
\begin{enumerate}
\item For any compact set $K$ contained in the interior of $\bigcap_{j>m} \Omega_j$ for some positive integer $m$, $K \subset \widehat{\Omega}$.
\item For any compact subset $K'$ of $\widehat{\Omega}$, there exists a constant $m > 0$ such that $K' \subset \bigcap_{j>m} \Omega_j$.
\end{enumerate}
\end{defn}

\begin{prop}
If $\Omega_j$ is a sequence of domains in $\mathbb{C}^n$ that 
converges normally to the domain $\widehat{\Omega}$, then
\begin{enumerate}
\item If a sequence of holomorphic mappings 
$f_j : \Omega_j \rightarrow \Omega '$ from $\Omega_j$ to another 
domain $\Omega '$ converges uniformly on compact subsets of 
$\widehat{\Omega}$, then its limit is a holomorphic mapping from 
$\widehat{\Omega}$ into the closure of the domain $\Omega '$.
\item If a sequence of holomorphic mappings 
$g_j : \Omega ' \rightarrow \Omega_j$ converges uniformly on 
compact subsets of $\Omega '$, if $\widehat{\Omega}$ is 
pseudoconvex, and if there are a point $p \in \Omega '$ and a 
constant $c > 0$ so that the inequality $|\det{(dg_j|_p)}| > c$ holds 
for each $j$, then $\lim_{j \to \infty}g_j$ is a holomorphic mapping 
from the domain $\Omega '$ into $\widehat{\Omega}$.
\end{enumerate}
\end{prop}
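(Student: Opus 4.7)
For part (1) the argument is essentially Weierstrass's theorem. For any compact $K \subset \widehat{\Omega}$, property (2) of normal convergence provides an index $m$ with $K \subset \Omega_j$ for all $j > m$, so the restrictions $f_j|_K$ are eventually well-defined holomorphic maps converging uniformly to some $f|_K$; the standard theorem on uniform limits then makes $f|_K$ holomorphic, and since $f_j(K) \subset \Omega'$ for each $j$, the limit values lie in $\overline{\Omega'}$. Assembling over a compact exhaustion of $\widehat{\Omega}$ yields the desired holomorphic map from $\widehat{\Omega}$ into $\overline{\Omega'}$.

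For part (2), I first establish that $g := \lim_j g_j$ is holomorphic on $\Omega'$ (Weierstrass) and that $|\det(dg|_p)| \geq c > 0$ (propagating the uniform convergence to the derivatives via Cauchy estimates). In particular $\det(dg)$ is not identically zero, so $g$ is an open mapping on $\Omega'$ and a local biholomorphism near $p$. Using a uniform inverse function theorem---invoking both the uniform lower bound on $|\det(dg_j|_p)|$ and the Cauchy-estimate upper bound on higher derivatives---I obtain a neighborhood $W$ of $p$ and some $r_0 > 0$ such that $g_j(W) \supset \mathbb{B}^n(g_j(p); r_0)$ for all large $j$. Since $g_j(p) \to g(p)$ and $g_j(W) \subset \Omega_j$, the ball $\mathbb{B}^n(g(p); r_0/2)$ sits in $\bigcap_{j>m} \Omega_j$ for some $m$; being open, any closed subball lies in the interior of this intersection, and property (1) of normal convergence forces that subball into $\widehat{\Omega}$, giving $g(p) \in \widehat{\Omega}$.

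The crux is to upgrade this to $g(\Omega') \subset \widehat{\Omega}$. Let $E := \{q \in \Omega' : g(q) \in \widehat{\Omega}\}$; this is open by continuity of $g$ and non-empty by the previous step, so by connectedness of $\Omega'$ it suffices to show $E$ is also closed. Suppose for contradiction that $q_0 \in (\overline{E}\setminus E) \cap \Omega'$, whence $g(q_0) \in \partial\widehat{\Omega}$. If $\det(dg|_{q_0}) \neq 0$, the same uniform-ball argument applies at $q_0$ and forces $g(q_0) \in \widehat{\Omega}$, a contradiction. The main obstacle is the remaining case, in which $q_0$ lies in the analytic subset $D := \{q : \det(dg|_q) = 0\} \subsetneq \Omega'$. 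Here pseudoconvexity of $\widehat{\Omega}$ must intervene: I plan to use the classical fact that $-\log\mathrm{dist}(\cdot,\partial\widehat{\Omega})$ is plurisubharmonic on $\widehat{\Omega}$, and, composing with $g$ along small analytic discs through $q_0$, apply a maximum-principle argument to conclude that each such disc maps either entirely into $\widehat{\Omega}$ or entirely into $\partial\widehat{\Omega}$. Since $g$ is open, varying the disc sweeps a full open neighborhood of $g(q_0)$ inside $g(\Omega')$, which cannot lie on the $(2n-1)$-real-dimensional set $\partial\widehat{\Omega}$, yielding the desired contradiction.
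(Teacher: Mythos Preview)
The paper does not give its own proof of this proposition: it is quoted as background from \cite{GKK}, Section~9.2.2, and used as a tool. So there is no ``paper's proof'' to match; below I comment only on the soundness of your sketch.

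Part (1) is correct and routine. In part (2) your ball argument at $p$ is fine, and in fact it applies verbatim at \emph{every} point $q$ with $\det(dg|_q)\neq 0$ (since $dg_j\to dg$ locally uniformly, you get an eventual uniform lower bound on $|\det(dg_j|_q)|$ there as well). This already yields $\Omega'\setminus D\subset E$ with $D:=\{\det(dg)=0\}$, and by density and continuity $g(\Omega')\subset\overline{\widehat\Omega}$; you should state this explicitly, because your final step tacitly uses it.

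The last paragraph, however, contains two genuine gaps. First, the assertion that $g$ is an open mapping does not follow from $\det(dg)\not\equiv 0$; the map $(z_1,z_2)\mapsto (z_1,z_1z_2)$ has Jacobian $z_1$ yet collapses $\{z_1=0\}$ to a point and is not open. Second, the claimed dichotomy ``each small analytic disc through $q_0$ maps entirely into $\widehat\Omega$ or entirely into $\partial\widehat\Omega$'' is not a consequence of the maximum principle for $-\log d_{\widehat\Omega}$: that function is only defined on $\widehat\Omega$, and a subharmonic function on a punctured disc may perfectly well tend to $+\infty$ at the puncture (e.g.\ $-\log|z|$), so no contradiction emerges from $u\to+\infty$ alone. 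A clean repair, sufficient for the application in this paper, is to note that once $g(\Omega')\subset\overline{\widehat\Omega}$ is known and $\widehat\Omega$ admits a global continuous plurisubharmonic defining function $\rho$ (as the scaled limit $\widehat\Omega=\{\mathrm{Re}\,w+\widehat P(z,\bar z)<0\}$ certainly does), then $\rho\circ g$ is plurisubharmonic on $\Omega'$, nonpositive, and vanishes at $q_0$; the strong maximum principle forces $\rho\circ g\equiv 0$, contradicting $g(p)\in\widehat\Omega$. For a general pseudoconvex $\widehat\Omega$ one needs the more careful argument in \cite{GKK}.
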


In our construction, the set-convergences 
$f_j(\Omega) \to \mathbb{B}^2$ and 
$\alpha_j(\Omega) \to \widehat{\Omega}$ are in accordance with the 
sense of normal set-convergence with 
\[
\widehat{\Omega} := \{ (w, z) \in \mathbb{C}^2 \mid \widehat{\rho} 
= \textrm{Re}\,w + \widehat{P} (z, \bar{z}) < 0 \}.
\] 
Notice that $\widehat{\Omega}$ is unbounded, so the convergence of 
the \textit{forward scaling sequence} $\{ \sigma_j^{-1} \}$ is not 
immediately obvious. On the other hand, one easily observes that
the inverse sequence (i.e., \textit{the reverse scaling sequence}) 
$\{ \sigma_j \}$ converges, choosing a subsequence when necessary, 
by Proposition 3.2 and Montel's theorem.  So we take a convergent
subsequence and call it $\{ \sigma_j \}$ again, and denote the limit map 
by $\widehat{\sigma}$. 
Since $\widehat{\sigma}(-1, 0) = (0, 0) \in \mathbb{B}^2$, it holds that
$\widehat{\sigma} (\widehat\Omega) \subset \mathbb{B}^2$. 
\smallskip

Now we show:

\begin{prop}
$\widehat{\sigma} : \widehat{\Omega} \rightarrow \mathbb{B}^2$ is 
a biholomorphic map.
\end{prop}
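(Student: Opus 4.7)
The plan is to construct a holomorphic inverse $\widehat{\tau} \colon \mathbb{B}^2 \to \widehat{\Omega}$ as a subsequential limit of the inverse maps $\sigma_j^{-1} \colon f_j(\Omega) \to \alpha_j(\Omega)$, and then verify that $\widehat{\tau}$ is a two-sided inverse of $\widehat{\sigma}$. Since $f_j(\Omega)$ contains $\mathbb{B}^2(0; 1 - \delta_j)$ with $\delta_j \to 0$, the domains $f_j(\Omega)$ converge normally to $\mathbb{B}^2$, and each $\sigma_j^{-1}$ sends the origin to the common point $(-1,0) \in \widehat{\Omega}$.

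The central step is to extract a subsequential limit. Because the target $\widehat{\Omega}$ is unbounded, Montel's theorem does not apply directly; instead I would invoke Kobayashi hyperbolicity and completeness of the model domain $\widehat{\Omega}$, which is standard for domains with defining function $\mathrm{Re}\,w + \widehat{P}(z, \bar z)$ where $\widehat{P}$ is a nonzero subharmonic polynomial of finite type with no harmonic terms in $\mathbb{C}^2$. Together with the common value $\sigma_j^{-1}(0) = (-1, 0)$ and the distance-decreasing property of the Kobayashi metric, this confines $\sigma_j^{-1}(K)$ to a fixed compact subset of $\mathbb{C}^2$ for every compact $K \subset \mathbb{B}^2$. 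Passing to a subsequence, $\sigma_j^{-1}$ converges uniformly on compact subsets of $\mathbb{B}^2$ to a holomorphic map $\widehat{\tau} \colon \mathbb{B}^2 \to \overline{\widehat{\Omega}}$ by Proposition 3.2(1). To upgrade the image from $\overline{\widehat{\Omega}}$ to $\widehat{\Omega}$ itself, I would invoke Proposition 3.2(2): it suffices to establish a uniform lower bound $|\det{(d\sigma_j^{-1}|_{0})}| \geq c > 0$. Since $d\sigma_j^{-1}|_{0}$ is the inverse of $d\sigma_j|_{(-1,0)}$, this is equivalent to a uniform upper bound on $|\det{(d\sigma_j|_{(-1,0)})}|$, which in turn follows from Cauchy estimates applied to $\{\sigma_j\}$ on a small fixed ball around $(-1,0)$ in $\widehat{\Omega}$, on which the sequence is uniformly bounded because it is $\mathbb{B}^2$-valued.

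Once $\widehat{\tau} \colon \mathbb{B}^2 \to \widehat{\Omega}$ is in hand, the rest is formal. For any $z \in \widehat{\Omega}$, $\widehat{\sigma}(z)$ lies in $\mathbb{B}^2$, so $\sigma_j(z)$ eventually lies in a fixed compact subset of $\mathbb{B}^2$ on which $\sigma_j^{-1}$ converges uniformly to $\widehat{\tau}$; the identity $\sigma_j^{-1}(\sigma_j(z)) = z$ therefore passes to the limit, yielding $\widehat{\tau}(\widehat{\sigma}(z)) = z$. The symmetric identity $\widehat{\sigma}(\widehat{\tau}(w)) = w$ for $w \in \mathbb{B}^2$ follows analogously, using uniform convergence of $\{\sigma_j\}$ on a compact neighborhood of $\widehat{\tau}(w)$ in $\widehat{\Omega}$. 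Hence $\widehat{\sigma}$ is biholomorphic. The principal obstacle is the entire middle paragraph: both preventing $\sigma_j^{-1}$ from escaping to infinity in the unbounded target $\widehat{\Omega}$, and preventing its limit from degenerating into $\partial\widehat{\Omega}$, rest on delicate hyperbolicity and completeness properties of the model domain that are specific to the $\mathbb{C}^2$ finite type setting.
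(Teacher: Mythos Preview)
Your approach is correct in outline but takes a genuinely different route from the paper's. You attack the problem head-on by extracting a limit of the \emph{forward} sequence $\sigma_j^{-1}\colon f_j(\Omega)\to\alpha_j(\Omega)$ on all of $\mathbb{B}^2$, relying on Kobayashi-metric bounds to prevent escape to infinity in the unbounded target, and then verifying that the limit $\widehat{\tau}$ is a two-sided inverse. The paper deliberately sidesteps precisely this step: it remarks that convergence of $\{\sigma_j^{-1}\}$ is ``not immediately obvious'' since $\widehat{\Omega}$ is unbounded, and works only with the reverse sequence $\{\sigma_j\}$, whose convergence is immediate by Montel. Injectivity of $\widehat{\sigma}$ then comes from Hurwitz-type arguments, and surjectivity is proved by contradiction: given $q\in\mathbb{B}^2\cap\partial\widehat{\sigma}(\widehat{\Omega})$, one builds a \emph{second} Pinchuk scaling $\alpha_j^q$ based at $q_j=f_j^{-1}(q)\to p$ (using that $p$ is a peak point), notes that the transition maps $\beta_j=\alpha_j^q\circ\alpha_j^{-1}$ are polynomial automorphisms of $\mathbb{C}^2$ of degree at most $2k$, and uses this algebraic rigidity to pass to a limit biholomorphism $\widehat{\beta}\colon\widehat{\Omega}\to\widehat{\Omega^q}$; a properness contradiction finishes. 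What each route buys: yours is conceptually cleaner once normality of $\{\sigma_j^{-1}\}$ is in hand, but that normality is exactly the delicate point---note in particular that complete hyperbolicity of the \emph{limit} $\widehat{\Omega}$ alone does not give the needed uniform-in-$j$ lower bound on the Kobayashi metrics of the varying domains $\alpha_j(\Omega)$; one really needs (as is indeed known in the $\mathbb{C}^2$ finite-type setting) that the $\alpha_j(\Omega)$ sit inside a common complete hyperbolic domain. The paper's double-scaling argument trades this analytic input for structural bookkeeping, using only Montel plus the bounded-degree polynomial form of the $\beta_j$.
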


\begin{proof}
The proof is almost the same as those for Propositions 2.8 and 2.10
of \cite{JooSR}.  However, the surjectivity part of $\widehat{\sigma}$ 
requires a few, simple but perhaps subtle adjustments. Therefore, we 
choose to include the detail here. 
\smallskip

Suppose that $\widehat{\sigma}$ is not onto. Then there is  
a boundary point $q$ of $\widehat{\sigma}(\widehat{\Omega})$ 
in $\mathbb{B}^2$. Notice that $f_j^{-1}(q)$ converges 
to $p$ since $p$ is a peak point of $\partial\Omega$. 
Denote by $q_j := f_j^{-1}(q)$. Now we construct a new scaling sequence 
$\sigma_j^q := f_j \circ (\alpha_j^q)^{-1}$ where 
$\alpha_j^q$ is a stretching map (in the sense of Pinchuk) with respect 
to $q_j$ as above. 
In the same way, there 
is a subsequential limit map $\widehat{\sigma^q} : \widehat{\Omega^q} 
\to \mathbb{B}^2$ of $\{ \sigma_j^q \}$, where $\widehat{\Omega^q}$ is 
the limit domain of the sequence $\alpha_j^q(\Omega)$ in the sense of normal set-convergence. 
We have already observed that $\widehat{\sigma^q}$ is 1-1. Taking 
a subsequence if necessary, we may assume that the uniform 
convergence holds for $\sigma_j^{-1} \to \widehat{\sigma}^{-1}$ and 
$(\sigma_j^q)^{-1} \to \widehat{\sigma^q}^{-1}$ on compact subsets of 
$\widehat{\sigma}(\widehat{\Omega})$ and $\widehat{\sigma^q}(\widehat{\Omega^q})$ respectively. 
(See Lemma 2.9 in \cite{JooSR}).
\smallskip

Denote by $W := \widehat{\sigma}(\widehat{\Omega}) \cap 
\widehat{\sigma^q}(\widehat{\Omega^q})$. Then the map 
$\beta_j := (\sigma_j^q)^{-1} \circ \sigma_j : \sigma_j^{-1}(W) \to 
(\sigma_j^q)^{-1}(W)$ is well-defined.
Actually, $\beta_j \equiv \alpha_j^q \circ \alpha_j^{-1}$, and this, for each 
$j$, is a polynomial automorphism of $\mathbb{C}^2$ with degree less 
than or equal to $2k$. On the other hand, $\beta_j$ converges to 
$\widehat{\beta} := (\widehat{\sigma^q})^{-1} \circ 
\widehat{\sigma}$ uniformly on compact subsets of 
$\widehat{\sigma}^{-1}(W)$. By calculation, $\widehat{\beta}$ turns out to 
be a polynomial automorphism of $\mathbb{C}^2$ of degree less than or 
equal to $2k$. Now Proposition 3.2 guarantees that the restriction 
$\widehat{\beta}|_{\widehat{\Omega}}$ is a 1-1 holomorphic map from 
$\widehat{\Omega}$ into $\widehat{\Omega^q}$. 
In a similar way for the inverse sequence $\{ \beta_j^{-1} \}$, the map 
$(\widehat{\beta})^{-1}|_{\widehat{\Omega^q}}$ is a 1-1 holomorphic map 
from $\widehat{\Omega^q}$ into $\widehat{\Omega}$. Hence 
$\widehat \beta \colon \widehat{\Omega} \to \widehat{\Omega^q}$ is 
a biholomorphism.

\begin{figure}[!h]
\centering
\includegraphics[height=2.4in, width=3.3in, angle=0]{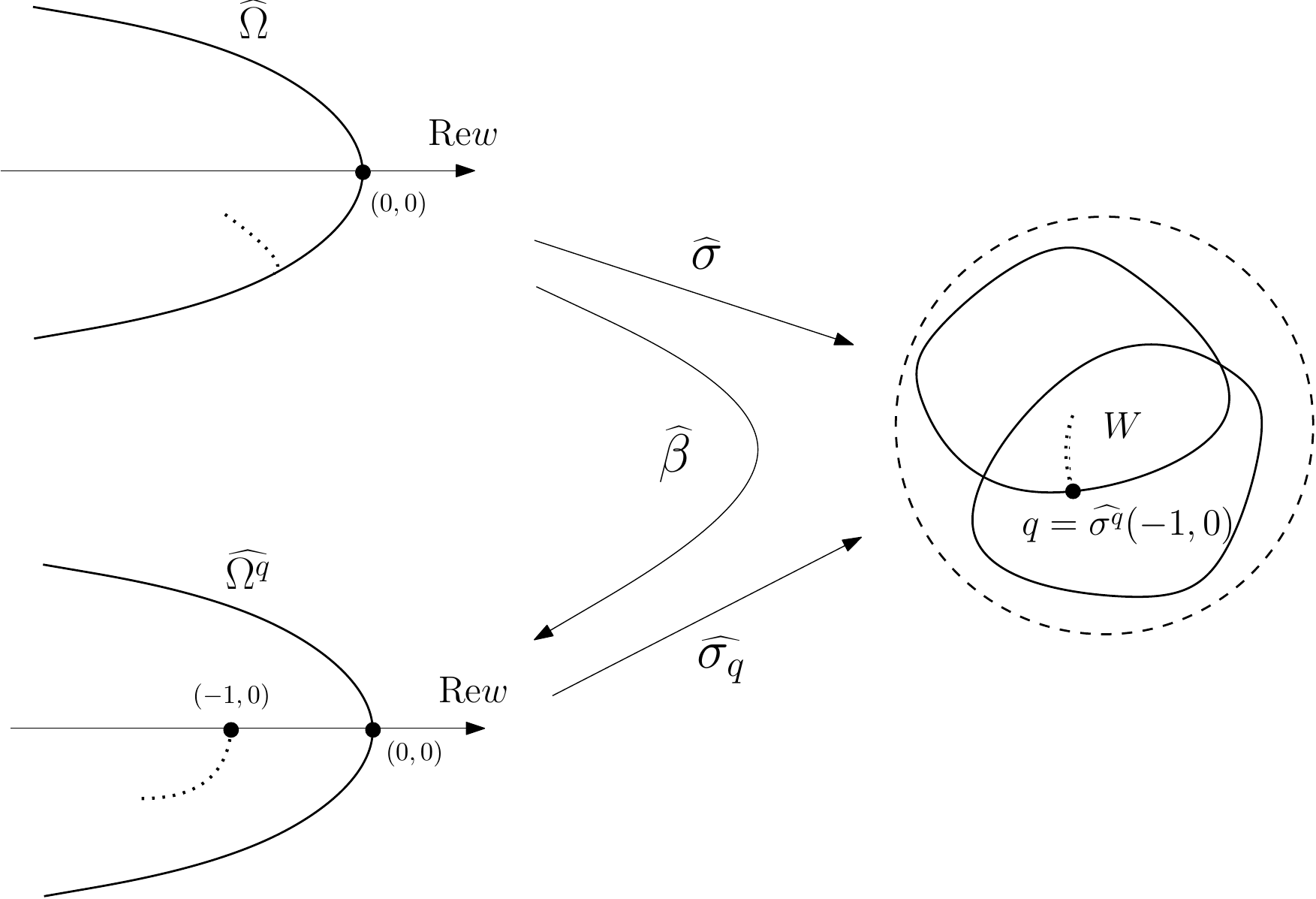}
\end{figure}
\bigskip

On the other hand, we see that a sequence of points in \(W\)
convergent to $q$ gives rise to a sequence in \(\widehat\Omega\) via
\(\widehat\sigma\) that approaches the boundary 
\(\partial \widehat\Omega\)
and also to a sequence in \(\widehat\Omega^q\) via
\(\widehat\sigma^q\) which, this time, converges to the interior point
\((-1,0)\).  This results in that the biholomorphism \(\widehat\beta
\colon \widehat\Omega \to \widehat\Omega^q\) maps a sequence 
approaching the boundary to a sequence convergent to an interior point.
So \(\widehat\beta\) fails to be proper, and the surjectivity of 
$\widehat{\sigma}$ follows by this contradiction.
\end{proof}

\section{Proof of the Theorem \ref{main}} 

We are ready to complete the proof of Theorem \ref{main}.
Recall that $\widehat{\Omega} := \{ (w, z) \in \mathbb{C}^2 \mid 
\textrm{Re}\,w + \widehat{P} (z, \bar{z}) < 0 \}$, where $\widehat{P}$ is a 
nonzero real-valued subharmonic polynomial of degree $2k$. Note that the 
unit ball $\mathbb{B}^2$ is biholomorphic to the Siegel half space 
$\{ (w, z) \in \mathbb{C}^2 \mid \textrm{Re}\,w + |z|^2 < 0 \}$. So the 
theorem of Oeljeklaus in \cite{Oelj}, which says that these two domains 
must be affinely biholomorphic in such a case, implies in particular that 
$\widehat{P} (z, \bar{z}) = c|z|^2$ for some $c > 0$. Therefore, the origin 
must have been the boundary point of type 2 in the first place.  Since 
the convergence $\rho_j \to \textrm{Re}\,w + c|z|^2$ is uniform on 
each jet-level, there exist positive contants $c$ and $j_0$ such that 
the smallest eigenvalue of the Levi form of each $\rho_j$ at $(0, 0)$ is 
larger than $c$ whenever \(j > j_0\).  Consequently, the Levi form of 
$\rho$ at $q$ is strictly positive-definite. This completes the proof of 
Theorem \ref{main}.
\hfill \(\Box\)

\section{A remark for the convex case}

Notice that the scaling sequence converges without difficulties in case
the domain is bounded convex. See e.g., \cite{KK}.  If the 
boundary point \(p\) under consideration satisfies the condition
\( \lim_{\Omega \ni q \to p} s_\Omega (q)=1\) and if \(p\) were 
\(C^\infty\) convex of 
infinite type, then the scaled limit turns out to be a convex domain 
that contains a one-dimensional disc with a positive radius in its 
boundary. (cf.\ \cite{Zimm}, Theorem 3.7). 

\begin{figure}[!h]
\centering
\includegraphics[height=2in, width=2.5in, angle=0]{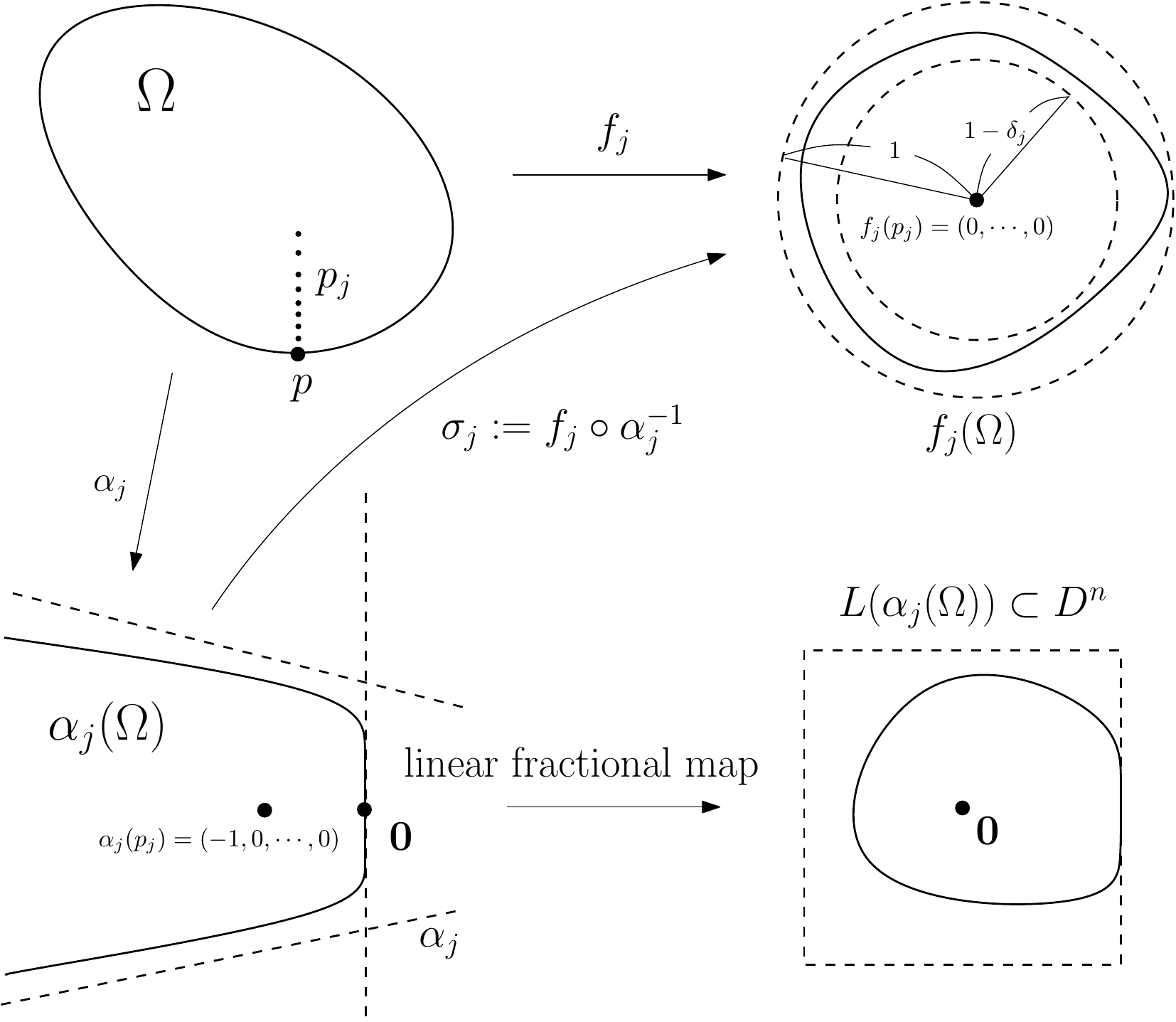}
\end{figure}

On the other hand, the preceding arguments imply that this domain has 
to be biholomorphic to the ball.  But this is impossible, for instance by
\cite{Huc}.  So the boundary point \(p\) has to be of finite type. Notice
that our arguments work in all dimensions in this case, as the 
scaling method for bounded convex domains converges regardless of
dimension (cf., e.g., \cite{KK}).  This reconfirms Zimmer's 
affirmative answer \cite{Zimm} to Question \ref{Fornaess} for the 
smoothly bounded convex domains in \(\mathbb C^n\) for all \(n\).

\end{document}